\begin{document}
\newcommand{\J}{\mathcal{J}} 
\newcommand{\cL}{\mathcal{L}}
\newcommand{\I}{\mathcal{I}}
\newcommand{\R}{\mathcal{R}}
\newcommand{\A}{\mathcal{A}}
\newcommand{\F}{\mathcal{F}}
\newcommand{\B}{\mathcal{B}}
\newcommand{\T}{\mathcal{T}}
\newcommand{\C}{\mathcal{C}}
\newcommand{\D}{\mathcal{D}}
\newcommand{\Q}{\mathcal{Q}}
\newcommand{\cS}{\mathcal{S}}
\newcommand{\FF}{\mathbb{F}}
\newcommand{\ov}{\overline}
\newcommand{\wh}{\widehat}
\newtheorem{theorem}{Theorem}[section]
\newtheorem{proposition}[theorem]{Proposition}
\newtheorem{lemma}[theorem]{Lemma}
\newtheorem{corollary}[theorem]{Corollary}

\theoremstyle{remark}
\newtheorem{remark}[theorem]{Remark}
\newtheorem{definition}[theorem]{Definition}
\newtheorem{example}[theorem]{Example}
\newtheorem{examples}[theorem]{Examples}
\newtheorem{question}[theorem]{Question}

\title[An elementary approach to Wedderburn's structure theory]
{An elementary approach to Wedderburn's structure theory}
\thanks{Supported by the Slovenian Research Agency (program No. P1-0288).
}
\author{Matej Bre\v sar}
\address{Faculty of Mathematics and Physics, University of Ljubljana, and Faculty of Natural Sciences and Mathematics,
 University of Maribor, Slovenia	}
\email{matej.bresar@fmf.uni-lj.si}
\date{}

\begin{abstract}
Wedderburn's theorem on the structure of  finite dimensional semisimple algebras  is proved by using minimal prerequisites.
\end{abstract}

\maketitle

\section{Introduction}

One hundred years have passed since  J.\,H.\,M.  Wedderburn published the paper on the structure of  finite dimensional semisimple algebras  \cite{W}. He proved that every such algebra is a direct product of matrix algebras over division algebras. Versions of this result appear in countless graduate algebra textbooks, often on their first pages. Standard proofs are based on the concept of a module over an algebra. The module-theoretic approach is certainly elegant and efficient, and moreover it gives a basis for developing various more general theories. However, Wedderburn's theorem  has a very simple formulation which  does not involve modules. Therefore it seems natural to seek for more direct approaches.  The goal of this article is to present a proof which uses only the most elementary tools.  It is short, but so are some module-theoretic proofs. Its main advantage is the conceptual simplicity. It cannot replace standard proofs if one has a development of a more sophisticated theory in mind.    But it might be more easily accessible to students.    Wedderburn's theorem is a typical graduate level topic, but using this approach it could be included in an undergraduate algebra  course. Students often find an introduction to algebra somewhat dry and formal, and therefore enliven it with colorful  theorems might make them more interested. We wish to show that  Wedderburn's  beautiful and important theorem is one such option.
   
In Section \ref{sprer} we give a self-contained  introduction to the theory of  (associative) algebras. 
While it is certainly somewhat fragmentary and condensed, it does not include only bare definitions, but also examples and informal comments.
 Along the way we establish  two simple and well-known lemmas that are needed later. 
 The author is aware that many of the potential readers of this article know the contents of Section 2 very well, and therefore they might find the inclusion of basic definitions and examples redundant. But what we wish to point out is  that in principle Wedderburn's theorem and its proof can be understood even by  a student 
familiar with basic concepts of  linear algebra. Therefore we have decided, in spite of the risk of appearing  na\"  ive, 
to assume the knowledge on vector spaces and matrices as the only 
background needed to follow this paper.

  In Section 3 we prove Wedderburn's theorem first for prime algebras with unity, then for general prime algebras, and finally for semiprime (i.e., semisimple) algebras. The reason for dealing with prime algebras instead of with (more common but less general) simple ones is not because of seeking for a greater level of generality, but because  the proofs run more smoothly in this setting.

\section{Prerequisites}\label{sprer}

A vector space $\A$ over a field $\FF$ is called an {\em algebra over $\FF$} if it is equipped with a map $\A\times\A\to \A$, 
denoted $(a,b)\mapsto ab$ and called  {\em multiplication}, that is bilinear and associative. This means that 
$$a(b+c)= ab + ac,\, (b+c)a=ba  + ca,\, \lambda(ab)=  (\lambda a)b = a(\lambda b),\,(ab)c = a(bc)$$
 for all $a,b,c\in \A$ and $\lambda\in\FF$. The element $ab$ is called the {\em product} of $a$ and $b$. 
  Setting $b=c=0$ in the first two identities we get $a0=0a=0$ for every $a\in\A$. Because  of the associativity it makes sense to introduce the notation $abc$ for the product of three elements; this can be interpreted either as $(ab)c$ or $a(bc)$. The meaning of  $a_1a_2\ldots a_n$, where $a_i\in\A$ and $n\ge 1$, should then be self-explanatory.
  If $ab=ba$ for all $a,b\in\A$, then $\A$ is said to be a {\em commutative algebra}.
 
 Unless stated otherwise, by an algebra we shall always mean an algebra over a (fixed) field $\FF$. The simplest example of an algebra is $\{0\}$; by a {\em nonzero algebra} we mean any algebra that contains a nonzero element.
 An algebra is said to be {\em finite dimensional} if it is finite dimensional as a vector space.  If $\B$ is a linear subspace of $\A$ such that $bb'\in\B$ whenever $b,b'\in\B$, then we call $\B$  a {\em subalgebra} of $\A$. It is clear that $\B$ itself is an algebra. If $a,b$ are elements of an algebra $\A$, then $a\A = \{ax\,|\,x\in\A\}$,  $\A b =\{xb\,|\,x\in\A\}$ and
 $a\A b =\{axb\,|\,x\in\A\}$
 are subalgebras of $\A$.

For every element $a$ in an algebra $\A$ we write 
$a^2$ for $aa$. We call $e\in\A$ an {\em idempotent} if $e^2=e$. For example, $0$ is an idempotent. Another basic example is a {\em unity}  element. This is  
a nonzero element $1_\A\in\A$ such that $1_\A a = a1_\A = a$ for all $a\in\A$.  We do not, however, require that our algebras must contain a unity element. Every idempotent different from $0$ and 
$1_\A$ is called a {\em nontrivial idempotent}. If $e$ is a nontrivial idempotent, then so is $1_\A-e$. We remark that $e(1_\A-e)=(1_\A-e)e=0$.

 If   $\A$ is an  algebra with unity, then $a\in\A$ is said to be {\em invertible} if there exists 
$b\in\A$ such that $ab = ba =1_\A$. If $a,b,c\in\A$ are such that
$ab = bc =1_\A$, then  $c= 1_\A c = (ab)c= a (bc)= a 1_\A =a $, and so $a$ is invertible.
Note that $b$ satisfying $ab=ba=1$ is unique. We denote it by $a^{-1}$.  
 A {\em division algebra} is an algebra with unity  in which every nonzero element is invertible. For example, $\FF$ is a $1$-dimensional division algebra over $\mathbb F$. We remark, however, that a commutative division algebra over $\FF$ is of course  a field itself, but not necessarily of dimension $1$ over $\FF$. For instance, $\mathbb C$ is a   $1$-dimensional division algebra over $\mathbb C$, a $2$-dimensional division algebra over $\mathbb R$, and an infinite dimensional algebra over $\mathbb Q$. There  exist  division algebras that are not commutative. The simplest (and the oldest - discovered by W. R. Hamilton in 1843!) example is the algebra $\mathbb H$ of real quaternions. As a vector space  $\mathbb H$ is a $4$-dimensional space over $\mathbb R$ with basis traditionally denoted by  $1,i,j,k$. Obviously, the multiplication in $\mathbb H$ is completely determined by products of basis elements. These are defined as follows: $i^2 = j^2 = k^2=-1$, $ij=-ji= k$, $jk = -kj = i$,
$ki=-ik = j$, and, as the notation suggests, $1$ is the unity element of $\mathbb H$. One can easily check that $\mathbb H$ is indeed an algebra, and moreover, if $h = a + bi + cj + dk$ where at least one of $a,b,c,d\in\mathbb R$ is nonzero, then $h$ is invertible with $h^{-1} = (a^2 + b^2 + c^2 + d^2)^{-1}(a - bi - cj - dk)$. 
Thus $\mathbb H$ is a noncommutative division algebra.

Note that  a division algebra $\D$ cannot contain nontrivial idempotents. Moreover, the product of nonzero elements in $\D$ is always nonzero.  
Division algebras  are  too ``perfect" to indicate the challenges of the general theory of algebras. The next example is more illustrative.
Let $M_n(\FF)$ denote the set of all $n\times n$ matrices $(a_{ij})$ with entries $a_{ij}$ belonging to  $\FF$. Then $M_n(\FF)$ becomes a (finite dimensional) algebra if we define addition, scalar multiplication, and multiplication in the usual way, i.e.,
$(a_{ij}) + (b_{ij}) = (a_{ij}+ b_{ij})$, $\lambda(a_{ij}) = (\lambda a_{ij})$, and $(a_{ij})(b_{ij}) = (c_{ij})$ where
$c_{ij}=\sum_{k=1}^n a_{ik}b_{kj}$. 
These operations  make sense in a more general context where the entries $a_{ij},b_{ij}$ are not scalars but elements of an arbitrary algebra $\C$. The algebra axioms  remain fulfilled in this setting. Thus, the set $M_n(\C)$ of all $n\times n$ matrices $(a_{ij})$ with $a_{ij}\in \C$ is an algebra under the standard matrix operations.

Let $\A$ be an algebra with unity and let $n\ge 1$. Elements $e_{ij}\in\A$, $i,j=1,\ldots,n,$ are called {\em matrix units} if $e_{11}+\ldots+e_{nn} = 1_\A$ and $e_{ij}e_{kl}=\delta_{jk}e_{il}$ for all $i,j,k,l$ (here, $\delta_{jk}$ is the ``Kronecker delta"). In particular, $e_{ii}$ are idempotents such that $e_{ii}e_{jj} =0$ if $i\ne j$. One can check that each $e_{ij}\ne 0$. If $\A=M_n(\C)$ where $\C$ is an algebra with unity, then $\A$ has matrix units. Indeed, the standard (but not the only) example is the following: $e_{ij}$ is the matrix whose $(i,j)$-entry is $1_\C$ and all other entries are $0$. In our first  lemma below we will show that $M_n(\C)$ is basically also the only example of an algebra with matrix units. But first we have to introduce another  concept which serves as a tool for discovering which  familiar  algebra is  hidden   behind the algebra in question.

Let $\A$ and $\B$ be algebras. We say that $\A$ and $\B$ are {\em isomorphic}, and we write $\A\cong\B$, if there exists a bijective linear map $\varphi:\A\to \B$ such that $\varphi(ab)=\varphi(a)\varphi(b)$ for all $a,b\in\A$. Such a map $\varphi$ is called an {\em isomorphism}.  Isomorphic algebras have exactly the same properties; informally we consider them as identical, although they may  appear very different
at a glance. For example, consider the subalgebra $\A$ of $M_2(\mathbb R)$ consisting of all matrices of the form 
$\begin{bmatrix}
a & b  \\
-b & a
\end{bmatrix}$ with $a,b\in\mathbb R$.   It is easy to see that  the map $\begin{bmatrix}
a & b  \\
-b & a
\end{bmatrix}\mapsto  a + bi$ is an isomorphism from $\A$ onto $\mathbb C$ (here  $\mathbb C$ is considered as an algebra over $\mathbb R$). Thus, as long as we are interested only in addition and (scalar) multiplication,  $\A$ is just a disguised form of more familiar complex numbers.

\begin{lemma}\label{LMU}
Let $\A$ be an algebra with unity. If $\A$ contains matrix units  $e_{ij}$, $i,j=1,\ldots n$, then $\A\cong M_n(e_{tt}\A e_{tt})$ for each $t=1,\ldots,n$. 
\end{lemma}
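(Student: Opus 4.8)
The plan is to fix $t\in\{1,\dots,n\}$, put $\D=e_{tt}\A e_{tt}$, and write down an explicit isomorphism between $\A$ and $M_n(\D)$. First I would record that $\D$ really is an algebra with unity: by the earlier remark $e_{tt}\A e_{tt}$ is a subalgebra of $\A$, and since $e_{tt}x=xe_{tt}=x$ for every $x\in\D$ while $e_{tt}\neq 0$, the element $e_{tt}$ serves as a unity of $\D$; in particular $M_n(\D)$ is a legitimate algebra. The natural map to try is
\[
\varphi\colon\A\longrightarrow M_n(\D),\qquad \varphi(a)=\bigl(e_{ti}\,a\,e_{jt}\bigr)_{1\le i,j\le n}.
\]
The first thing to check is that $\varphi$ is well defined, i.e.\ that each entry lands in $\D$ and not merely in $\A$: from $e_{tt}e_{ti}=e_{ti}$ and $e_{jt}e_{tt}=e_{jt}$ we get $e_{ti}a e_{jt}=e_{tt}(e_{ti}a e_{jt})e_{tt}\in e_{tt}\A e_{tt}=\D$. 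Linearity of $\varphi$ is immediate from bilinearity of the multiplication.

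Next I would verify multiplicativity. The $(i,j)$ entry of $\varphi(a)\varphi(b)$ is $\sum_{k=1}^n(e_{ti}a e_{kt})(e_{tk}b e_{jt})$; using $e_{kt}e_{tk}=e_{kk}$ this equals $\sum_k e_{ti}a\,e_{kk}\,b e_{jt}$, and since $\sum_k e_{kk}=1_\A$ it collapses to $e_{ti}(ab)e_{jt}$, which is exactly the $(i,j)$ entry of $\varphi(ab)$. Hence $\varphi(a)\varphi(b)=\varphi(ab)$. To finish, I would establish bijectivity by exhibiting a two-sided inverse (this avoids any appeal to finite-dimensionality): define $\psi\colon M_n(\D)\to\A$ by $\psi\bigl((d_{ij})\bigr)=\sum_{i,j}e_{it}d_{ij}e_{tj}$, and then check, by the same style of computation using $e_{ab}e_{cd}=\delta_{bc}e_{ad}$, $\sum_i e_{ii}=1_\A$, and $e_{tt}d_{ij}e_{tt}=d_{ij}$, that $\psi\circ\varphi=\mathrm{id}_\A$ and $\varphi\circ\psi=\mathrm{id}_{M_n(\D)}$. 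This shows $\varphi$ is an isomorphism, so $\A\cong M_n(e_{tt}\A e_{tt})$.

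I do not expect any genuine obstacle here; the argument is a sequence of short identity manipulations. The only points that require care are making sure the entries of $\varphi(a)$ are recognized as elements of the corner algebra $\D$ (not just of $\A$), and keeping the Kronecker deltas straight in the multiplicativity check and in the two verifications that $\psi$ inverts $\varphi$. Since $t$ was chosen arbitrarily, the stated isomorphism holds for every $t=1,\dots,n$.
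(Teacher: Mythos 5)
Your proof is correct and follows essentially the same route as the paper: the same map $\varphi(a)=(e_{ti}ae_{jt})$, the same check that the entries lie in $e_{tt}\A e_{tt}$, and the same multiplicativity computation via $\sum_k e_{kk}=1_\A$. The only difference is that you establish bijectivity by writing down the explicit inverse $\psi$, whereas the paper proves injectivity directly and leaves surjectivity to the reader; your version is a slightly more complete write-up of the same argument.
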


\begin{proof}
For every $a\in\A$ we set $a_{ij} = e_{ti}ae_{jt}$. We can also write $a_{ij} = e_{tt}e_{ti}ae_{jt}e_{tt}$ and so $a_{ij}\in e_{tt}\A e_{tt}$. 
 Now define $\varphi:\A\to M_n(e_{tt}\A e_{tt})$ by $\varphi(a)=(a_{ij})$. The linearity
of $\varphi$ is clear. The $(i,j)$-entry of $\varphi(a)\varphi(b)$ is equal to $\sum_{k=1}^n e_{ti}ae_{kt}e_{tk}be_{jt} =e_{ti}a(\sum_{k=1}^n e_{kk})be_{jt} = e_{ti}abe_{jt},$ which is the $(i,j)$-entry of $\varphi(ab)$. Thus, $\varphi(ab)=\varphi(a)\varphi(b)$. If $a_{ij}=0$ for all $i,j$, then $e_{ii}ae_{jj} = e_{it}a_{ij}e_{tj} =0$, and so $a=0$ since the sum of all $e_{ii}$ is $1_\A$. Thus $\varphi$ is injective.
Checking  the surjectivity is also easy.
\end{proof}  

An algebra $\A$ is said to be {\em prime} if for all $a,b\in\A$, $a\A b = \{0\}$ implies $a=0$ or $b=0$. 
If $\D$ is a division algebra, then  $M_n(\D)$ is a prime algebra for every $n\ge 1$. Indeed,  if  $a,b\in  M_n(\D)$ are such that 
 $ae_{ij} b = 0$ for all standard matrix units $e_{ij}$, then  $a=0$ or $b=0$. The proof is an easy exercise and we omit details. Next, an algebra $\A$ is said to be {\em semiprime} if for all $a\in\A$, $a\A a = \{0\}$ implies $a=0$.
Obviosuly, prime algebras are semiprime. The converse is not true, as we shall see in the next paragraph.  

Let $\A_1$ and $\A_2$ be algebras. Then their Cartesian product $\A_1\times \A_2$ becomes an algebra by definining operations in the following natural way: 
\begin{align*}
(a_1,a_2) + (b_1,b_2) &= (a_1+b_1,a_2+b_2),\\
\lambda(a_1,a_2)&=(\lambda a_1,\lambda a_2),\\
(a_1,a_2)(b_1,b_2) &= (a_1b_1,a_2b_2).
\end{align*}
This algebra, which we denote just as the Cartesian product by $\A_1\times\A_2$, is called the {\em direct product} of algebras $\A_1$ and $\A_2$. Similarly we define the direct product $\A_1\times\ldots\times\A_n$ of any finite family of algebras. 
Just as the product of two natural numbers  different from $1$ is not a prime number, the direct product of two algebras different from $\{0\}$ is not a prime algebra. However, the direct product of semiprime algebras is a semiprime algebra. An example of an algebra that is not semiprime is the algebra $T_n(\FF)$ of all upper triangular matrices in $M_n(\FF)$. Note that this is indeed an algebra, i.e., a subalgebra of $M_n(\FF)$, and that $e_{ij}T_n(\FF)e_{ij}=\{0\}$ if $i < j$. 

A linear subspace $\cL$ of an algebra $\A$ is called a {\em left ideal} of $\A$ if $a\cL\subseteq \cL$ for every $a\in\A$. For example, $\A a$ is a left ideal of $\A$ for every $a\in\A$. It is easy to see that $\{0\}$ and $\D$ are the only left ideals of a division algebra $\D$. Nevertheless,  as the proof of the next lemma will show, there is an important connection between left ideals and division algebras. Before stating this lemma, we mention an illustrative example.  If $e_{tt}$ is a standard matrix unit of $M_n(\D)$, then  $e_{tt}M_n(\D)e_{tt}$ consists of all matrices whose whose $(t,t)$-entry is an arbitrary element in $\D$ and all other entries are $0$. Therefore $e_{tt}M_n(\D)e_{tt}\cong \D$.

\begin{lemma} \label{LD}
If $\A$ is a nonzero finite dimensional semiprime algebra, then there exists an idempotent $e\in\A$ such that
$e\A e$ is is a division algebra.
\end{lemma}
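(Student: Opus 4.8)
The plan is to find the idempotent by a minimality argument. Among all nonzero left ideals of $\A$, pick one, say $\cL$, of smallest possible dimension; this is possible since $\A$ itself is a nonzero finite dimensional left ideal, so the set of dimensions of nonzero left ideals is a nonempty set of positive integers. The goal is to produce an idempotent $e\in\cL$ with $\cL = \A e$, and then to show $e\A e$ is a division algebra.

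\smallskip

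First I would locate an idempotent inside $\cL$. Since $\A$ is semiprime and $\cL\neq\{0\}$, I claim $\cL\cL\neq\{0\}$: if $\cL\cL=\{0\}$, then for any $0\neq a\in\cL$ we would have $a\A a\subseteq \cL\A\cdot a \subseteq \cL\cL = \{0\}$ — wait, one must be careful, since $\cL$ is only a left ideal, $a\A$ need not lie in $\cL$. Instead argue: pick $0\neq a\in\cL$; then $\A a$ is a nonzero left ideal contained in $\cL$, so by minimality $\A a=\cL$. If $\A a \cdot a=\{0\}$, then in particular $(\A a)(\A a) = \A(a\A a)\subseteq \A(\{0\})=\{0\}$, so $\cL\cL=\{0\}$, and then $a\A a\subseteq \cL\A$... still the same snag. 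Cleaner: from $\cL a = \A a\cdot a$, if this is zero then $\A a \A a = \A (a\A a)$; hmm. Let me instead use semiprimeness directly on $a$: we want $a\A a\neq\{0\}$. Suppose $a\A a=\{0\}$. Consider $\B=\{x\in\A: \A x=\{0\}\}$... Actually the slick route: among nonzero left ideals choose $\cL$ minimal; fix $0\neq a\in \cL$ with (using semiprimeness) $a\A a \neq \{0\}$ — such an $a$ exists because if $a\A a=\{0\}$ for every $a\in\cL$, one shows the left ideal $\cL$ consists of elements $a$ with... This is exactly the point I'd expect to require the most care. The resolution: pick $0\neq a\in\cL$; by semiprimeness $a\A a\neq\{0\}$ (applied to this specific $a$, not all of $\cL$ — and if $a\A a=\{0\}$ just replace: actually semiprimeness says $a\A a=\{0\}\Rightarrow a=0$, so for our $0\neq a$ we automatically get $a\A a\neq\{0\}$). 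Hence there is $x\in\A$ with $axa\neq 0$. Set $\cL_0=\A(xa)\subseteq\cL$, a nonzero left ideal (nonzero since $a(xa)=axa\neq0$), so $\cL_0=\cL$ by minimality.

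\smallskip

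Now, writing $b=xa$, we have $ab\neq 0$ and $\A b=\cL$, so $a\in\A b$, say — more usefully, the map $\rho\colon \cL\to\cL$, $\rho(y)=yb$, is a linear map of the finite dimensional space $\cL$; its image is $\cL b\subseteq \cL$, and $\cL b = \A b\cdot b\ni ab\neq 0$, wait I need $\cL b$ to be a left ideal: $\cL b=\A(bb)$? No: $\cL b = (\A b)b=\A(b^2)$, a left ideal, contained in $\cL$, and it is nonzero iff $b^2\neq 0$. Since $ab\neq 0$ with $a\in\cL$... hmm, $a$ need not be in the image. Let me instead argue $\rho$ is injective: $\ker\rho=\{y\in\cL: yb=0\}$ is a left ideal of $\A$ contained in $\cL$ (it is a left ideal since if $yb=0$ then $(zy)b=z(yb)=0$), and it is proper since $\rho\neq 0$ (because $ab\neq 0$ shows some element, namely... we need $a\in\cL$, which holds, so $\rho(a)=ab\neq0$). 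By minimality $\ker\rho=\{0\}$, so $\rho$ is injective, hence bijective on the finite dimensional space $\cL$. Therefore there is $e\in\cL$ with $eb = b$, and more: since $\rho$ is bijective and $b\in\cL$ (as $b=xa\in\cL$ because $\cL$ is a left... no, $b=xa$ and $x\in\A$, $a\in\cL$, so $xa\in\cL$, yes), there is $e\in\cL$ with $\rho(e)=b$, i.e. $eb=b$. Then $\rho(e^2)=e^2 b=e(eb)=eb=\rho(e)$, so by injectivity $e^2=e$; and $e\neq 0$ since $eb=b\neq 0$. Finally $\A e\subseteq\cL$ is a nonzero left ideal (contains $e\neq0$), so $\A e=\cL$ by minimality.

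\smallskip

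It remains to check $e\A e$ is a division algebra with unity $e$. Clearly $e$ acts as unity on $e\A e$. Take $0\neq eae\in e\A e$; I must find an inverse in $e\A e$. Consider the left ideal $\A eae\subseteq \A e=\cL$: it is nonzero (it contains $e(eae)=eae\neq0$), hence equals $\cL=\A e$ by minimality. Thus $e\in\A eae$, so $e=ceae$ for some $c\in\A$; multiplying by $e$ on the left and using $e^2=e$ gives $e=(ece)(ae)= (ece)(eae)$, wait I should track: $e = c\,eae$, now left-multiply by $e$: $e = e = ec\,eae = (ece)(ae)$; and $ece\in e\A e$, while I want the right factor to be $eae$: note $(ece)(eae) = ec(e)(eae)= ec\,eae \cdot$ — since $e\,eae = eae$, we get $(ece)(eae)=ec\,eae = e$. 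So $d:=ece\in e\A e$ satisfies $d(eae)=e$. By the argument in the "Prerequisites" section (an element with a left inverse and a right inverse is invertible — or simply: apply the same minimality argument to get a right inverse, then one-sided inverses coincide), we conclude $eae$ is invertible in $e\A e$. Hence every nonzero element of $e\A e$ is invertible, so $e\A e$ is a division algebra, completing the proof. The step I expect to be the main obstacle is arranging the injective multiplication map cleanly — making sure every left ideal I invoke minimality on is genuinely contained in $\cL$ and genuinely nonzero — and that is where semiprimeness is essential, to guarantee $a\A a\neq\{0\}$ and thereby get started.
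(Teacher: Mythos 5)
Your proof is correct and follows essentially the same route as the paper: a minimal left ideal $\cL$, semiprimeness to get $axa\neq 0$, the annihilator-in-$\cL$ (your $\ker\rho$) being a proper left subideal and hence zero to produce the idempotent $e$ with $\A e=\cL$, and then $\A eae=\A e$ to invert nonzero elements of $e\A e$. Phrasing the idempotent step via injectivity and bijectivity of $y\mapsto yb$ on the finite-dimensional space $\cL$ is only a cosmetic repackaging of the paper's argument.
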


\begin{proof}
Pick a nonzero left ideal  $\cL$  of minimal dimension, i.e., $\dim_\FF \cL\le \dim_\FF \J$ for every nonzero left ideal $\J$ of $\A$. Obviously, $\{0\}$ is then the only left ideal that is properly contained in $\cL$. Let $0\ne x\in\cL$. Since $\A$ is semiprime, there exists $a\in\A$ such that $xax\ne 0$. As $y=ax\in\cL$, we have found $x,y\in\cL$ with $xy\ne 0$. In particular, $\cL y\ne \{0\}$. But  $\cL y$ is  a left ideal of $\A$ contained in $\cL$, and so $\cL y =\cL$. Accordingly, as $y\in\cL$ we  have $ey =y$ for some $e\in \cL$. This implies that
 $e^2-e$ belongs to the set $\J = \{z\in\cL \,|\, zy=0\}$. Clearly, $\J$ is again a left ideal of $\A$ contained in $\cL$. Since $x\in\cL\setminus{ \J}$, this time we conclude that $\J =\{0\}$. In particular, $e^2=e$. As 
$e\in\cL$, we have $\A e\subseteq \cL$, and since $0\ne e\in\A e$ it follows that
 $\cL=\A e$. Now consider the subalgebra $e\A e$ of $\A$. Obviously, $e=1_{e\A e}$. Let $a\in\A$ be such that $eae\ne 0$. The lemma will be proved by showing that $eae$ is invertible in $e\A e$. We have $\{0\}\ne \A eae \subseteq \A e =\cL$, and so $\A eae =\cL$. Therefore there is $b\in\A$ such that
 $beae = e$, and hence also $(ebe)(eae)=e$. Now, $ebe$ is again a nonzero element in $e\A e$, and so by the same argument there is $c\in\A$ such that $(ece)(ebe)=e$. But then $(eae)^{-1}= ebe$ (see the paragraph introducing division algebras).
\end{proof}

We remark that the finite dimensionality of $\A$ was used only for finding a left ideal which does not properly contain
any other nonzero left ideal. Such left ideals are called {\em minimal left ideals}. Thus, Lemma \ref{LD} also holds for a 
semiprime algebra $\A$ of arbitrary dimension which has a minimal left ideal $\cL$. The proof actually shows that $\cL$ is of the form $\cL = \A e$ where $e$ is an idempotent such that $e\A e$ is a division algebra. But we shall not need this more general result.

As one can guess, a {\em right ideal} of $\A$ is defined as  
a linear subspace $\I$ of $\A$ such that 
$\I a\subseteq \I$ for every $a\in\A$. If $\I$ is simultaneously a left ideal and a right ideal of $\A$, then $\I$ is called an {\em ideal} of $\A$. Let us give a few examples.   If $\A = \A_1\times \A_2$, then $\I = \A_1\times\{0\}$ and  $\J = \{0\}\times\A_2$ are ideals of $\A$. 
The set $S_n(\FF)$ of all strictly upper triangular matrices (i.e., matrices in $T_n(\FF)$ whose diagonal entries are $0$)  is an ideal of $T_n(\FF)$. All polynomials with coefficients in $\FF$ form an  algebra, denoted  $\FF[X]$, under standard  operations. An example of its ideal is the set of all polynomials with constant term zero.

The notions of prime and semiprime algebras can be equivalently introduced through ideals. An algebra $\A$ is prime if and only if for each pair of nonzero ideals $\I$ and $\J$ of $\A$ there exist $x\in\I$ and $y\in\J$ such that $xy\ne 0$. An algebra $\A$ is semiprime if and only if it does not contain nonzero nilpotent ideals; by a nilpotent  ideal  we mean an ideal $\I$ such that for some $n\ge 1$ we have $x_1\ldots x_n=0$ for all $x_1,\ldots,x_n\in\I$. For instance,
$S_n(\FF)$ is a nilpotent ideal of $T_n(\FF)$. 
Since we shall not need these alternative definitions, we leave the proofs as exercises for the reader.

\section{Wedderburn's theorem} \label{secW}

Let us recall our convention that an ``algebra" means an algebra over a fixed, but arbitrary field $\FF$. 

Our first goal is to prove the basic theorem of Wedderburn's structure theory, characterizing matrix algebras over division algebras through their abstract  properties. Let us first outline the concept of the proof to help the reader not to get lost in the (inevitably) tedious notation in the formal proof.
By Lemma \ref{LMU} it is enough to show that the algebra $\A$ in question contains matrix units $e_{ij}$ such that
$e_{tt}\A e_{tt}$ is a division algebra for some $t$.  
  Lemma  \ref{LD} yields the existence of an idempotent $e$ such that $e\A e$ is a divison algebra. Think of $e$ as $e_{nn}$. A simple argument based on the induction on  $\dim_\FF \A$ shows that the algebra
  $(1_\A -e)\A (1_\A -e)$ contains matrix units $e_{11},e_{12},\ldots,e_{n-1,n-1}$ with $e_{tt}\A e_{tt}$ being division algebras. It remains to find 
  $e_{n1},\ldots,e_{n,n-1}$ and $e_{1n},\ldots,e_{n-1,n}$. Finding $e_{1n}$ and $e_{n1}$ is the heart of the proof; here
  we make use of the fact that $e_{11}\A e_{11}$ and $e_{nn}\A e_{nn}$ are division algebras. The remaining matrix units 
  can be then just directly defined as 
  $e_{nj}= e_{n1}e_{1j}$ and $e_{jn}= e_{j1}e_{1n}$, $j=2,\ldots,n-1$; checking that they satisfy all 
  desired identities is straightforward.

\begin{theorem}\label{W1} {\rm (Wedderburn's theorem for prime algebras with unity)} Let $\A$ be a finite dimensional algebra with unity. Then $\A$ is prime if and only if there exist a positive integer $n$ and a  division algebra $\D$ such that
$\A\cong M_n(\D)$.
\end{theorem}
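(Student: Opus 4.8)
The plan is to prove the nontrivial implication by induction on $d=\dim_\FF\A$; the converse is immediate, since $M_n(\D)$ has already been observed to be a finite dimensional prime algebra with unity. If $d=1$, then $\A\cong\FF$ is a division algebra and $\A\cong M_1(\FF)$. Suppose $d>1$. By Lemma \ref{LD} there is a nonzero idempotent $e\in\A$ with $e\A e$ a division algebra. If $e=1_\A$, then $\A=e\A e$ is itself a division algebra and we are done with $n=1$; so assume $e\ne1_\A$ and put $f=1_\A-e$, a nontrivial idempotent.

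First I would apply the induction hypothesis to $f\A f$. This is a finite dimensional algebra with unity $f$ (note $f\ne0$), its dimension is strictly smaller than $d$ because $e\notin f\A f$, and it is prime: if $a,b\in f\A f$ satisfy $a(f\A f)b=\{0\}$, then, using $af=a$ and $fb=b$, we get $a\A b=a(f\A f)b=\{0\}$, so $a=0$ or $b=0$. Hence $f\A f\cong M_m(\D')$ for some $m\ge1$ and some division algebra $\D'$, and pulling the standard matrix units of $M_m(\D')$ back through the isomorphism produces matrix units $e_{ij}\in f\A f$ ($1\le i,j\le m$) with $\sum_{i=1}^m e_{ii}=f$ and each $e_{tt}(f\A f)e_{tt}\cong\D'$ a division algebra; note $e_{tt}(f\A f)e_{tt}=e_{tt}\A e_{tt}$ since $e_{tt}\in f\A f$. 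Put $n=m+1$ and $e_{nn}=e$. As $ef=fe=0$ and $e_{ij}\in f\A f$ for $i,j\le m$, we have $ee_{ij}=e_{ij}e=0$, and $\sum_{i=1}^n e_{ii}=f+e=1_\A$.

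The heart of the proof---and the step I expect to be the main obstacle---is the construction of $e_{1n}$ and $e_{n1}$; the idea is to use primeness twice. Since $e_{11},e_{nn}\ne0$ and $\A$ is prime, there is $x\in\A$ with $u:=e_{11}xe_{nn}\ne0$, and since $u\ne0$ there is $w\in\A$ with $uwu\ne0$. Set $v:=e_{nn}we_{11}$; a direct computation using $e_{11}^2=e_{11}$ and $e_{nn}^2=e_{nn}$ gives $uvu=uwu\ne0$, so $uv$ is a nonzero element of the division algebra $e_{11}\A e_{11}$. Choose $p\in e_{11}\A e_{11}$ with $p(uv)=(uv)p=e_{11}$ and set $e_{1n}:=pu$ and $e_{n1}:=v$. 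Then $e_{1n}\in e_{11}\A e_{nn}$, $e_{n1}\in e_{nn}\A e_{11}$ and $e_{1n}e_{n1}=p(uv)=e_{11}$. For the opposite product, $g:=e_{n1}e_{1n}$ lies in $e_{nn}\A e_{nn}$ and is an idempotent, because $g^2=e_{n1}(e_{1n}e_{n1})e_{1n}=e_{n1}e_{11}e_{1n}=g$ (here $e_{n1}e_{11}=e_{n1}$); moreover $g\ne0$, since $g=0$ would force $e_{11}=e_{11}^2=e_{1n}ge_{n1}=0$. As the division algebra $e_{nn}\A e_{nn}$ has no nontrivial idempotents, $g=e_{nn}$.

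To finish, set $e_{ij}:=e_{i1}e_{1j}$ for all $i,j\in\{1,\dots,n\}$, where for $i\le m$ the elements $e_{i1},e_{1i}$ are the ones coming from $f\A f$ (for $i,j\le m$ this agrees with the $e_{ij}$ already defined). Verifying $e_{1j}e_{k1}=\delta_{jk}e_{11}$ for all $j,k$ is a short case check: for $j,k\le m$ it is the matrix-unit relation in $f\A f$; for $j=k=n$ it is $e_{1n}e_{n1}=e_{11}$; and if exactly one of $j,k$ equals $n$ the product is $0$, since $e_{1j}e_{nn}=0$ (resp.\ $e_{nn}e_{k1}=0$) because $e_{1j}$ (resp.\ $e_{k1}$) lies in $f\A f$ and $fe=ef=0$. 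Together with $e_{i1}e_{11}=e_{i1}$ this gives $e_{ij}e_{kl}=e_{i1}(e_{1j}e_{k1})e_{1l}=\delta_{jk}e_{il}$, and since also $\sum_{i=1}^n e_{ii}=1_\A$ the $e_{ij}$ are matrix units of $\A$. Lemma \ref{LMU} then yields $\A\cong M_n(e_{nn}\A e_{nn})=M_n(e\A e)$, a matrix algebra over the division algebra $e\A e$, which completes the induction.
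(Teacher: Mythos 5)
Your proof is correct and takes essentially the same route as the paper: induction on dimension using Lemma \ref{LD}, passing to $(1_\A-e)\A(1_\A-e)$, constructing $e_{1n}$ and $e_{n1}$ from primeness together with the division-algebra structure of the corner algebras, and concluding with Lemma \ref{LMU}. The only cosmetic differences are in how you manufacture the pair with $e_{1n}e_{n1}=e_{11}$ and in your use of the no-nontrivial-idempotents property of $e\A e$ (rather than the paper's invertibility argument) to get $e_{n1}e_{1n}=e_{nn}$.
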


\begin{proof}
We have already mentioned that the algebra $M_n(\D)$ is prime. Therefore we only have to prove the
``only if" part. The proof is by induction on $N=\dim_\FF \A$. 

If $N=1$, then $\A = \FF 1_\A$ is a field and the result trivially holds (with $n=1$ and $\D = \FF$). 
We may therefore assume that $N > 1$. 
By Lemma \ref{LD} there exists an idempotent $e\in\A$ such that $e\A e$ is a division algebra. 
If $e=1_\A$, then the desired result  holds (with $n=1$).
 Assume therefore that $e$ is a nontrivial idempotent, and set $\wh{\A} = (1_\A -e)\A (1_\A -e)$. Note that $\wh{\A}$ is a prime algebra with unity $1_\A - e$. Further, we have  $e\wh{\A} = \wh{\A}e = \{0\}$, and so $e\notin \wh{\A}$. Therefore  $\dim_\FF \wh{\A} < N$. Using the  induction assumption it follows that  $\wh{\A}$ contains matrix units $e_{ij}$, $i,j=1,\ldots,m$, for some $m\ge 1$, such that
 $e_{ii}\wh{\A}e_{ii}$ is a division algebra for each $i$.
 Since $e_{ii}=(1_\A -e)e_{ii}=e_{ii}(1_\A -e)$, we actually have $e_{ii}\wh{\A}e_{ii}=e_{ii}\A e_{ii}$.   Our goal is  to extend these matrix units of $\wh{\A}$ to matrix units of $\A$. We begin by setting $n=m+1$ and    $e_{nn} = e$. Then
 $ e_{11}+\ldots +e_{n-1,n-1}+e_{nn} = (1_\A-e) +e =1_\A$. Using the definition of primeness twice we see that
 $e_{11}ae_{nn}a'e_{11} \ne 0$ for some $a,a'\in\A$. As $e_{11}\A e_{11}$ is a division algebra with unity $e_{11}$, it follows that  $(e_{11}ae_{nn}a'e_{11})(e_{11}a''e_{11})  = e_{11}$ for some  $a''\in\A$. Thus,
 $e_{11}ae_{nn}be_{11}=e_{11}$ where $b=a'e_{11}a''$. Let us set $e_{1n} = e_{11}ae_{nn}$ and $e_{n1}=e_{nn}be_{11}$, so that  
 $e_{1n}e_{n1}=e_{11}$. Since $e_{n1}\in e_{nn}\A e_{11}$, we have $e_{n1} = e_{nn}e_{n1}$ and 
 $e_{n1} = e_{n1}e_{11} = e_{n1}e_{1n}e_{n1}$. Comparing both relations we get $(e_{nn} -  e_{n1}e_{1n})e_{n1} =0$.
 The element  $e_{nn} -  e_{n1}e_{1n}$ lies in the division algebra  $e_{nn}\A e_{nn}$. If it is nonzero, then we can
 multiply the last identity from the left-hand side by its inverse, which gives $e_{nn}e_{n1} =0$, and hence $e_{n1}=0$ - a contradiction. Therefore $e_{nn} =  e_{n1}e_{1n}$. Finally we set
 $e_{nj}= e_{n1}e_{1j}$ and $e_{jn}= e_{j1}e_{1n}$ for $j=2,\ldots,n-1$. Note that $e_{ij} = e_{i1}e_{1j}$ then holds for all $i,j=1,\ldots,n$. Consequently, for all 
 $i,j,k,l=1,\ldots,n$ we have $e_{ij}e_{kl} = e_{i1}e_{1j}e_{k1}e_{1l} = \delta_{jk}e_{i1}e_{11}e_{1l} =\delta_{jk}e_{i1}e_{1l} = \delta_{jk}e_{il}$. Thus $e_{ij}$, $i,j=1,\ldots,n$, are indeed matrix units of $\A$.  Therefore  Lemma \ref{LMU} tells us that $\A \cong M_n(\D)$ where $\D=e_{tt}\A e_{tt}$ (for any $t$). As we know, $\D$ is a division algebra.
  \end{proof}

An obvious but important fact can be added to the statement of the theorem:  $\D$ is  also finite dimensional. 
  
 The existence of unity in Theorem \ref{W1} is actually superfluous. The necessary changes needed to cover the case without unity are not difficult, but slightly tedious. In order not to distract the reader's attention with these technicalities 
in the basic proof,  we have decided to deal with the unital case first. Besides,  to make the concept more simple and clear, many mathematicians add the existence of unity to axioms of an algebra. Each of the two settings, the unital and the non-unital, had its  advantages and its disadvantages. Our preference is the non-unital set-up, and later when dealing with semiprime algebras we shall try to demonstrate its usefulness.

Without assuming the presence of $1_\A$,  apparently we cannot introduce the algebra  $\wh{\A} = (1_\A -e)\A (1_\A -e)$ from the last proof. But actually this problem can be avoided by appropriately simulating $1_\A$. Note that 
$(1_\A -e)a (1_\A -e) = a - ea - ae + eae$, and on the right hand side $1_\A$ does not appear.
 
\begin{lemma} \label{L1}
Let $e$ be a nontrivial idempotent in a prime algebra $\A$.  Then  $\wh{\A} = \{ a - ea - ae + eae \,|\,a\in\A\}$ is a nonzero prime algebra with $e\notin \wh{\A}$. If $\wh{\A}$ has a unity, then 
$\A$ has a unity  as well. Moreover, $1_\A = e+1_{\wh{\A}}$.
\end{lemma}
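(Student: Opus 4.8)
The plan is to work with the explicit set $\wh{\A}=\{a-ea-ae+eae\mid a\in\A\}$ and check each assertion in turn, using repeatedly the identity $(1_\A-e)a(1_\A-e)=a-ea-ae+eae$ as a guiding (but unused) intuition. First I would observe that $\wh{\A}$ is a linear subspace of $\A$, since it is the image of the linear map $a\mapsto a-ea-ae+eae$. To see it is a subalgebra, I would set $f(a)=a-ea-ae+eae$ and compute $f(a)f(b)$ directly: since $ef(a)=f(a)e=0$ for all $a$ (a short calculation using $e^2=e$), one gets $f(a)f(b)=f(a)b=f(a)b-e f(a)b-f(a)be+ef(a)be=f(f(a)b)\in\wh{\A}$, where the middle equality again uses $ef(a)=f(a)e=0$. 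So $\wh{\A}$ is closed under multiplication.

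Next I would show $\wh{\A}\neq\{0\}$ and $e\notin\wh{\A}$. If $f(a)=0$ for every $a\in\A$, then $a=ea+ae-eae=ea+ae-eae$ for all $a$; feeding in elements and using primeness one reaches a contradiction with $e$ being nontrivial — concretely, multiplying $f(x)=0$ cleverly, or simply noting $(1_\A-e)$ is a nonzero idempotent in the ambient sense so there is $a$ with $(1_\A-e)a(1_\A-e)\neq 0$; but since we have no $1_\A$, the cleanest route is: if $\wh{\A}=\{0\}$ then $a=ea+ae-eae$ for all $a$, so $(e)(a-ea)=ea-ea=0$, i.e. $e\A(1 - e)$-type products vanish, and one deduces $e\A e' =\{0\}$ for a suitable nonzero combination, contradicting primeness. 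I expect this to require a little care and will phrase it so that $a - ea$ and $a - ae$ play the roles of $(1_\A-e)a$ and $a(1_\A-e)$. For $e\notin\wh{\A}$: if $e=f(a)$ then $e=ef(a)=0$ since $ef(a)=0$, contradicting that $e$ is nontrivial.

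For primeness of $\wh{\A}$: suppose $u,v\in\wh{\A}$ with $u\wh{\A}v=\{0\}$. I want to upgrade this to $u\A v=\{0\}$ and invoke primeness of $\A$. Given any $a\in\A$, note $uav = u(a-ea-ae+eae)v = uf(a)v$ because $ue=0=ev$ (as $u,v\in\wh{\A}$), and $uf(a)v=0$ since $f(a)\in\wh{\A}$. Hence $u\A v=\{0\}$, so $u=0$ or $v=0$; thus $\wh{\A}$ is prime. Finally, for the unity statement: suppose $\wh{\A}$ has a unity $g:=1_{\wh{\A}}$. Then $g\in\wh{\A}$, so $eg=ge=0$. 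I would set $u=e+g$ and verify $ua=au=a$ for all $a\in\A$. Write $a=ea+\bigl(a-ea\bigr)$; here $a-ea$ satisfies $e(a-ea)=0$, but it need not lie in $\wh{\A}$, so I must also split off the right-hand $e$-part: decompose $a = eae + (ea-eae) + (ae-eae) + f(a)$, where the four pieces are the analogues of $eae$, $ea(1-e)$, $(1-e)ae$, $(1-e)a(1-e)$. On $eae$ one checks $u(eae)=(e+g)eae=eae$ since $geae=0$; on $f(a)$ one gets $u f(a)=(e+g)f(a)=gf(a)=f(a)$ since $ef(a)=0$ and $g$ is the unity of $\wh{\A}$; the two mixed terms $ea-eae$ and $ae-eae$ are handled by noting e.g. $ea-eae=e(a-ae)$ and $(a-ae)\in$ (left-$e$-null part), so that... — the main obstacle is precisely this middle step, because the mixed terms are neither killed by $e$ nor by $g$, so one must show $g$ annihilates $ea-eae$ on the right and $e$ reproduces it on the left, which amounts to proving $(ea-eae)g=0$ and $e(ea-eae)=ea-eae$; the first follows since $ea-eae=(ea)(1-e)$-style and $ge=0$ forces... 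I will verify $ea - eae = e\,f(a')$-shifted... Cleanest: show $ea-eae$ lies in $e\A\cap\wh{\A}e$-complement and that both $u(ea-eae)$ and $(ea-eae)u$ collapse correctly using $eg=ge=0$ together with $g f(a)=f(a)=f(a)g$. Once all four pieces check out we get $ua=au=a$, and since $u=e+g\neq 0$ (as $eu=e\neq 0$), $u$ is a unity of $\A$; by uniqueness of unity, $1_\A=e+1_{\wh{\A}}$. I expect the bookkeeping on these mixed terms to be the only genuinely delicate point; everything else is bilinearity plus the orthogonality relations $ef(a)=f(a)e=0$ and $eg=ge=0$.
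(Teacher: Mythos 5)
Your treatment of the subalgebra property, of $e\notin\wh{\A}$, and of the primeness of $\wh{\A}$ is essentially the paper's argument: the paper packages closure as $\wh{a}\wh{b}=\wh{ab}-\wh{aeb}$, you as $f(a)f(b)=f(f(a)b)$, and both are correct, although your intermediate equality $f(a)f(b)=f(a)b$ is a slip (the correct value is $f(a)b-f(a)be$, which does equal $f(f(a)b)$). The two remaining assertions, however, are left with genuine gaps. For $\wh{\A}\ne\{0\}$ you never produce a working argument: the line ``$e(a-ea)=ea-ea=0$'' holds in every algebra and leads nowhere, and ``one deduces $e\A e'=\{0\}$ for a suitable combination'' is not a proof. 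The missing ingredient is the identity $(a-ae)b(c-ec)=a\wh{b}c$, which shows that $\wh{\A}=\{0\}$ forces $(a-ae)\A(c-ec)=\{0\}$ for all $a,c$; primeness then gives that either $a=ae$ for all $a$ or $c=ec$ for all $c$, each condition implies the other, and so $e$ would be a unity of $\A$, contradicting its nontriviality.

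The more serious gap is the unity statement. You plan to verify $(e+g)a=a(e+g)=a$ piecewise on the decomposition $a=eae+(ea-eae)+(ae-eae)+f(a)$, and you correctly flag the mixed corner terms as the obstacle --- but the obstacle is not bookkeeping: the identity $g(ae-eae)=ae-eae$ simply does not follow from $eg=ge=0$ and $gf(a)=f(a)g=f(a)$. (Every attempt reduces to needing $gx=x$ for $x$ with $ex=0$ and $xe=x$, which is exactly what is to be proved.) Indeed, without primeness the claim is false: in the four-dimensional subalgebra of $M_3(\FF)$ spanned by $e_{11},e_{22},e_{21},e_{31}$, with $e=e_{11}$, one checks that $\wh{\A}=\FF e_{22}$ has unity $g=e_{22}$ and that all your formal relations hold, yet $(e+g)e_{31}=0\ne e_{31}$ and $\A$ has no unity at all. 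So primeness of $\A$ must be invoked in this step, and your outline never uses it. The paper's route is: with $\wh{f}=1_{\wh{\A}}$, compute $\wh{f}a\wh{f}=(a-ea)\wh{f}$, hence $((e+\wh{f})a-a)\wh{f}=0$, hence $((e+\wh{f})a-a)b\wh{f}=0$ for all $b$, and primeness together with $\wh{f}\ne 0$ yields $(e+\wh{f})a=a$; the identity $a(e+\wh{f})=a$ is obtained symmetrically. You should replace the piecewise verification by an argument of this kind.
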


\begin{proof}
Let us denote $\wh{a} = a - ea - ae + eae$.  
One can verify that for all $a,b\in\A$ we have $\wh{a}\wh{b} = \wh{ab} -\wh{aeb}$ and
$e\wh{a} = \wh{a}e =0$.
In particular, $\wh{\A}$ is a subalgebra of $\A$ with $e\notin\wh{\A}$. Further, we have $(a-ae)b(c - ec) = a\wh{b}c$.
Therefore, if $\wh{\A} =\{0\}$, then $(a-ae)\A (c - ec) =\{0\}$, and hence either $a = ae$ for every $a\in\A$ or
$c= ec\in\A$ for every $c\in\A$. But each of these two conditions implies the other one. Indeed, if, say, $a=ae$ holds, then  $a(c-ec) = (a - ae)c=0$ for all $a,c\in\A$, implying $c=ec$. Accordingly, $\wh{\A} \ne \{0\}$ since $e\ne 1_\A$ by assumption.    Next, since $b - \wh{b}\in e\A + \A e$ we have $\wh{a}(b-\wh{b})\wh{c} = 0$, i.e., $\wh{a}\wh{b}\wh{c} = \wh{a}b\wh{c}$. Therefore, the primeness of $\A$ implies the primeness of  $\wh{\A}$. 
Finally, assume that $\wh{\A}$ has a unity. Let $f\in\A$ be such that  $\wh{f}=1_{\wh{\A}}$.  Then $\wh{f}a\wh{f} = \wh{f}\wh{a}\wh{f} =\wh{a}\wh{f} = (a-ea)\wh{f}$ since $e\wh{f}=0$. Thus, we have $((e+\wh{f})a -a)\wh{f}=0$ for all $a\in\A$, and therefore
  $((e+\wh{f})a -a)b\wh{f}= ((e+\wh{f})ab -ab)\wh{f}=0$ for all $a,b\in\A$.  Since $\A$ is prime it follows that 
  $(e+\wh{f})a =a$.
  Similarly we derive   $\wh{f}b(a(e+\wh{f}) -a)=0$, and so $a(e+\wh{f}) =a$. Thus $e+\wh{f}=1_\A$.
\end{proof}

\begin{corollary}\label{C}
 A  nonzero finite dimensional prime algebra has a unity.
\end{corollary}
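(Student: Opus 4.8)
The plan is to induct on $N=\dim_\FF\A$, using Lemma~\ref{LD} to manufacture a nonzero idempotent and Lemma~\ref{L1} to cut the algebra down to a strictly smaller one. So suppose the corollary holds for all nonzero finite dimensional prime algebras of dimension less than $N$, and let $\A$ be such an algebra with $\dim_\FF\A=N$. Since a prime algebra is semiprime, Lemma~\ref{LD} provides an idempotent $e\in\A$ with $e\A e$ a division algebra; in particular $e\ne 0$. (When $N=1$ this already finishes the job, since then $\A=\FF e$ and $e$, being the unity of $e\A e=\A$, is $1_\A$; but strictly speaking no separate base case is needed, because the inductive step below always forces a strict drop in dimension.)

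Next I would split according to whether $e$ is a unity of $\A$. If $ea=ae=a$ for every $a\in\A$, then $e=1_\A$ and we are done. Otherwise $e$ is a nontrivial idempotent, so Lemma~\ref{L1} applies and tells us that $\wh{\A}=\{a-ea-ae+eae\mid a\in\A\}$ is a nonzero prime algebra with $e\notin\wh{\A}$. Being the image of a linear map on $\A$, the set $\wh{\A}$ is a linear subspace of the finite dimensional space $\A$; and since it omits the element $e\in\A$, it is a \emph{proper} subspace, whence $\dim_\FF\wh{\A}<N$. Moreover $\wh{\A}\ne\{0\}$, so it is a genuine nonzero algebra and the induction hypothesis applies to it, yielding a unity $1_{\wh{\A}}$. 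The final assertion of Lemma~\ref{L1} then gives that $\A$ has a unity as well (namely $e+1_{\wh{\A}}$), completing the induction.

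The argument is very short because the substance is already contained in Lemmas~\ref{LD} and~\ref{L1}; there is no real obstacle to overcome. The only place demanding a moment's attention is the dimension count in the inductive step: one must notice that $\wh{\A}$ is a \emph{proper} subspace of $\A$, and this is exactly what the clause ``$e\notin\wh{\A}$'' of Lemma~\ref{L1} buys us (together with $e\ne0$), so that the induction hypothesis can legitimately be invoked.
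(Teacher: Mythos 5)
Your proof is correct and takes essentially the same route as the paper: induction on $\dim_\FF\A$, Lemma~\ref{LD} to produce the idempotent $e$, and Lemma~\ref{L1} to pass to the strictly smaller nonzero prime algebra $\wh{\A}$ and lift its unity to $1_\A=e+1_{\wh{\A}}$. The only cosmetic difference is the base case, which the paper handles directly via $a^2=\lambda a$ while you observe it is subsumed by the dichotomy.
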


\begin{proof}
Again we proceed by induction on  $N=\dim_\FF \A$. If $N=1$, then $\A = \FF a$ where $0\ne a\in\A$. We have $a^2 = \lambda a$ for some  $\lambda\in \FF$. Since $\A$ is prime, $\lambda\ne 0$. But then one can check that $1_\A=\lambda^{-1}a$. Let $N > 1$. By Lemma \ref{LD}  there exists an idempotent $e\in\A$ such that $e\A e$ is a division algebra. With no loss of generality we may assume that $e$ is a nontrivial idempotent.
Define $\wh{\A}$ as in Lemma \ref{L1}. Since $\wh{\A}$ is a nonzero prime algebra and a proper subalgebra of $\A$, it has a unity by induction assumption. Consequently, $\A$ has a unity by Lemma \ref{L1}.
\end{proof}

We can therefore state a sharper version of  Theorem \ref{W1}.
 
\begin{theorem} \label{W2}
{\rm (Wedderburn's theorem for prime algebras)} Let $\A$ be a nonzero finite dimensional algebra. Then $\A$ is prime if and only if there exist a positive integer $n$ and  a  division algebra $\D$ such that
$\A\cong M_n(\D)$.
\end{theorem}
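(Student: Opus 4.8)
The plan is to deduce Theorem \ref{W2} from Theorem \ref{W1} with essentially no extra work, using Corollary \ref{C} as the bridge. The ``if'' direction is already known: $M_n(\D)$ is prime (and nonzero) for any division algebra $\D$, so there is nothing to prove there. For the ``only if'' direction, suppose $\A$ is a nonzero finite dimensional prime algebra. By Corollary \ref{C}, $\A$ has a unity element. Hence $\A$ is a finite dimensional \emph{unital} prime algebra, and Theorem \ref{W1} applies directly: there exist $n\ge 1$ and a division algebra $\D$ with $\A\cong M_n(\D)$.

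First I would state the ``if'' part in one sentence, recalling the remark from Section \ref{sprer} that $M_n(\D)$ is prime and observing it is nonzero. Then I would handle the ``only if'' part by invoking Corollary \ref{C} to supply a unity on $\A$, and then quoting Theorem \ref{W1} verbatim. One could optionally append the observation (as was done after Theorem \ref{W1}) that $\D$ is automatically finite dimensional, since $\D\cong e_{tt}\A e_{tt}$ is a subspace of the finite dimensional space $\A$; but this is not needed for the statement as written.

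There is no real obstacle here: the theorem is a ``repackaging'' result whose entire content was already assembled in Corollary \ref{C} (which in turn rested on Lemma \ref{L1} handling the passage from $\wh{\A}$ to $\A$ without a unity). The only thing to be careful about is the hypothesis ``nonzero'': it is needed so that Corollary \ref{C} applies and so that the conclusion (which forces $\A\cong M_n(\D)\ne\{0\}$) is consistent. I would make sure the word ``nonzero'' is used in the one place it matters, namely before invoking Corollary \ref{C}. So the proof is genuinely three lines long, and the ``hard part'' was done earlier.

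\begin{proof}
As observed in Section \ref{sprer}, $M_n(\D)$ is a prime algebra, and it is clearly nonzero; this proves the ``if'' part. Conversely, assume that $\A$ is a nonzero finite dimensional prime algebra. By Corollary \ref{C}, $\A$ has a unity. Theorem \ref{W1} now applies and yields a positive integer $n$ and a division algebra $\D$ with $\A\cong M_n(\D)$. As remarked after Theorem \ref{W1}, $\D$ is then finite dimensional as well.
\end{proof}
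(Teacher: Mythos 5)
Your proof is correct and is exactly the paper's intended argument: the paper states Theorem \ref{W2} as an immediate consequence of Corollary \ref{C} (which supplies the unity) combined with Theorem \ref{W1}. Nothing further is needed.
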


 Let us  mention that it
is more standard to state  Theorem \ref{W2}  with ``prime"  replaced by ``simple". An algebra $\A$ is said to be 
 {\em simple} if $\{0\}$ and $\A$ are its only ideals and  $ab\ne 0$ for some $a,b\in\A$. It is an easy exercise to show  that a simple algebra is prime. The converse is not true. For example,  $\FF[X]$ is prime but not simple.
However,  in the finite dimensional context the notions of primeness and simplicity coincide. This follows from 
Theorem \ref{W2} together with the easily proven fact that $M_n(\D)$ is a simple algebra. 

Let us switch to semiprime algebras. We begin with some general remarks about arbitrary algebras. 
Suppose that an algebra $\A$ has ideals $\I$ and $\J$ such that $\I + \J =\A$ and $\I\cap\J=\{0\}$. Then
 $\A\cong \I\times\J$. Indeed, $\I\cap\J =\{0\}$ yields  $xy=yx=0$ for all $x\in\I$, $y\in\J$, and this implies that $x+y\mapsto(x,y)$ is an isomorphism from $\A$ onto $\I\times\J$. Incidentally, a kind of converse also  holds: 
if $\A = \A_1\times \A_2$, then $\I = \A_1\times\{0\}$ and  $\J = \{0\}\times\A_2$ are ideals of $\A$ whose sum is equal to $\A$ and  whose intersection is trivial. 
It should be pointed out that such ideals, which one might call ``factors" in a direct product,  are considered as rather special.
Yet in the situation we will consider in the last theorem,  these  are in fact the only ideals that exist.

\begin{lemma}\label{LDP}
If an ideal $\I$ of an algebra $\A$ has a unity $1_\I$, then there exists an ideal $\J$ of $\A$ such that
$\A\cong\I\times\J$.
\end{lemma}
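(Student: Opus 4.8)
The plan is to let $e=1_\I$ and to show first that $e$ is a \emph{central} idempotent of $\A$. That $e^2=e$ is immediate since $e\in\I$ and $e$ is the unity of $\I$. For centrality, note that for every $a\in\A$ both $ea$ and $ae$ belong to the ideal $\I$, so, using that $e$ acts as a two-sided unity on $\I$, we get $eae=e(ae)=ae$ on the one hand and $eae=(ea)e=ea$ on the other; comparing these yields $ea=ae$ for all $a\in\A$.

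Next I would introduce the natural candidate for a complementary ideal, $\J=\{a-ea\,|\,a\in\A\}$ (informally $(1_\A-e)\A$, but written so as to avoid invoking a unity of $\A$). It is clearly a linear subspace, and since $e$ is central we have, for all $a,b\in\A$, $b(a-ea)=ba-e(ba)\in\J$ and $(a-ea)b=ab-e(ab)\in\J$; hence $\J$ is an ideal of $\A$. Then I would verify the two conditions of the remark preceding the lemma: every $a\in\A$ splits as $a=ea+(a-ea)$ with $ea\in\I$ and $a-ea\in\J$, so $\I+\J=\A$; and if $x\in\I\cap\J$, then on one hand $ex=x$ (because $x\in\I$), while on the other hand writing $x=a-ea$ gives $ex=ea-e^2a=ea-ea=0$, so $x=0$. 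Thus $\I\cap\J=\{0\}$, and the already-established fact that such a pair of ideals gives $\A\cong\I\times\J$ completes the argument.

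The only step that really needs care is the first one, the centrality of $e$: this is exactly what upgrades $\J$ from a one-sided ideal to a genuine ideal, and without it the decomposition would not produce a direct product of algebras. Everything after that point is routine bookkeeping.
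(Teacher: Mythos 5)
Your proof is correct and follows essentially the same route as the paper's: both identify $e=1_\I$ as a central idempotent via the identity $ea=eae=ae$, take $\J=\{a-ea\mid a\in\A\}$ as the complementary ideal, and verify $\I+\J=\A$, $\I\cap\J=\{0\}$ before invoking the remark preceding the lemma. The only cosmetic difference is that the paper also records $\I=1_\I\A$ explicitly, which your argument bypasses by using $ex=x$ for $x\in\I$ directly.
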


\begin{proof}
First note that $1_\I\A\subseteq\I$ since $1_\I\in\I$, and conversely,
$\I =1_\I\I\subseteq 1_\I\A$. Therefore $\I = 1_\I\A$. Next, since $1_\I a,a1_\I\in\I$ for every $a\in\A$, we have $1_\I a =(1_\I a)1_\I$ and $a1_\I=1_\I(a 1_\I )$.
 Comparing both relations we get $a1_\I = 1_\I a$ for all $a\in\A$. Using this we see that $\J =\{a- 1_\I a\,|\,a\in\A\}$ is also an ideal of $\A$. It is clear that $\I + \J =\A$.
 If $x\in\I\cap\J$, then $1_\I a=x=b-1_\I b$ for some $a,b\in\A$. Multiplying  by $1_\I$ we get $x=0$. Thus $\I\cap\J=\{0\}$. Now we refer to the discussion preceding the lemma.
\end{proof}

\begin{theorem} \label{W3}
{\rm (Wedderburn's theorem for semiprime algebras)} Let $\A$ be a nonzero finite dimensional algebra. Then $\A$ is semiprime if and only if there exist positive integers $n_1,\ldots,n_r$ and  division algebras $\D_1,\ldots,\D_r$ such that $\A\cong M_{n_1}(\D_1)\times\ldots\times M_{n_r}(\D_r)$.
\end{theorem}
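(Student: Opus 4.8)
The plan is to prove the two directions separately, with the "only if" direction being the substantive one. For the "if" direction, I would observe that each $M_{n_i}(\D_i)$ is prime (already noted in the excerpt) hence semiprime, and that a finite direct product of semiprime algebras is semiprime (also already remarked); so any algebra isomorphic to such a product is semiprime.

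For the "only if" direction, I would argue by induction on $N = \dim_\FF \A$. If $\A$ is prime, then Theorem \ref{W2} immediately gives $\A \cong M_n(\D)$ and we are done with $r=1$. So assume $\A$ is semiprime but not prime. The key is to extract a nonzero proper ideal that splits off as a direct factor. Here I would invoke Lemma \ref{LD} to get an idempotent $e$ with $e\A e$ a division algebra, and then build from $e$ a nonzero ideal of $\A$ carrying a unity. The natural candidate is the ideal $\I$ generated by $e$, i.e.\ $\I = \A e\A$ (or its linear span); the work is to show $\I$ has a unity. Actually a cleaner route: since $\A$ is semiprime and not prime, there exist nonzero ideals $\I_0,\J_0$ with $\I_0\J_0 = \{0\}$; one then refines this to produce a nonzero ideal that is itself a semiprime algebra of smaller dimension and has a unity — but establishing the unity directly is the obstacle.

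The cleanest self-contained approach I would actually carry out: pick a nonzero ideal $\I$ of $\A$ of \emph{minimal} dimension among nonzero ideals. Then $\I$ is a prime algebra (if $x\I y = \{0\}$ for $x,y \in \I$ with $x \ne 0$, then $\A x \A$ is a nonzero ideal inside... — one checks via semiprimeness that $\I$ itself is prime, because a nilpotent ideal of $\I$ would generate a nilpotent-ish ideal of $\A$, contradicting semiprimeness; and minimality forces primeness). Being a nonzero finite dimensional prime algebra, $\I$ has a unity $1_\I$ by Corollary \ref{C}. Now Lemma \ref{LDP} applies: there is an ideal $\J$ of $\A$ with $\A \cong \I \times \J$. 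Since $\I$ is prime, Theorem \ref{W2} gives $\I \cong M_{n_1}(\D_1)$. The complement $\J$ is an ideal of $\A$, hence (as a direct factor) a semiprime algebra, and $\dim_\FF \J < N$ because $\I \ne \{0\}$; if $\J = \{0\}$ we are done, otherwise the induction hypothesis writes $\J \cong M_{n_2}(\D_2) \times \cdots \times M_{n_r}(\D_r)$, and combining yields the desired decomposition of $\A$.

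The main obstacle is the claim that a minimal nonzero ideal $\I$ of a semiprime algebra is prime (and in particular that $\I^2 \ne \{0\}$). The point $\I^2 \ne \{0\}$ follows since $\I^2$ is an ideal and $\I^2 = \{0\}$ would make $\I$ a nonzero nilpotent ideal, contradicting semiprimeness; then minimality gives $\I^2 = \I$. For genuine primeness of $\I$ one must check that for $x,y \in \I$, $x\I y = \{0\}$ forces $x = 0$ or $y = 0$: the set $\{z \in \I : z\I y = \{0\}\}$ is an ideal of $\A$ (using that $\I$ is an ideal of $\A$ and $\I^2 = \I$), so by minimality it is $\{0\}$ or $\I$; in the latter case $\I\I y = \{0\}$, so $\I y = \{0\}$, and then $\{z : \I z = \{0\}, z \in \I\}$ is again an ideal of $\A$ inside $\I$, forcing $y = 0$ or this set to be all of $\I$ — but $\I \I = \I \ne \{0\}$ rules the latter out. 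Getting these ideal-theoretic bookkeeping steps exactly right, using only semiprimeness and minimality, is where the care is needed; everything else is an assembly of Corollary \ref{C}, Lemma \ref{LDP}, Theorem \ref{W2}, and the induction.
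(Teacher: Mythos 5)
Your proof is correct, but it reaches the splitting ideal by a genuinely different route than the paper. The paper never considers minimal ideals: using that $\A$ is not prime it picks $a\ne 0$ with $\I=\{x\in\A\,|\,a\A x=\{0\}\}\ne\{0\}$, verifies in one line (via $(xax)b(xax)=x(axbxa)x$) that this annihilator ideal is a \emph{semiprime} algebra of smaller dimension, applies the induction hypothesis to $\I$ itself to write it as a product of matrix algebras --- whence $\I$ has a unity --- and only then invokes Lemma \ref{LDP} to split $\A\cong\I\times\J$ and finishes by induction on $\J$. You instead take a nonzero ideal $\I$ of minimal dimension, prove it is a \emph{prime} algebra, get the unity from Corollary \ref{C} (or directly from Theorem \ref{W2}), and peel off one simple factor $M_{n_1}(\D_1)$ per inductive step. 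Your bookkeeping for the primeness of $\I$ is sound: $\I^2$ is an ideal inside $\I$ and semiprimeness rules out $\I^2=\{0\}$ (for $x\in\I$ one has $x\A x\subseteq x\I\subseteq\I^2$), so minimality gives $\I^2=\I$; the set $\{z\in\I\,|\,z\I y=\{0\}\}$ is an ideal of $\A$ because $(za)wy=z(aw)y$ with $aw\in\I$, so minimality makes it $\{0\}$ or $\I$, and in the latter case $\I^2=\I$ converts $\I\I y=\{0\}$ into $\I y=\{0\}$, after which the ideal $\{z\in\I\,|\,\I z=\{0\}\}$ cannot be all of $\I$ and hence forces $y=0$. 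What the paper's choice buys is brevity --- semiprimeness of the annihilator ideal is a one-line check, whereas primeness of a minimal ideal needs the multi-step argument you sketch; what your choice buys is a cleaner structural picture (each step extracts exactly one simple factor, and the unity of $\I$ comes from Corollary \ref{C} rather than from first decomposing $\I$ by induction). Both arguments rest on the same pillars: Theorem \ref{W2}, Lemma \ref{LDP}, and induction on dimension.
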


\begin{proof}
Since  $M_{n_i}(\D_i)$ is a prime algebra, and the direct product of (semi)prime algebras is semiprime,  the ``if" part is true. The converse will be  proved by induction on $N=\dim_{\FF}\A$. The $N=1$ case is trivial, so let $N > 1$.
If $\A$ is  prime, then the result follows from Theorem \ref{W2}. 
We may therefore assume that there exists $0\ne a\in\A$ such that $\I=\{x\in\A\,|\,a\A x=\{0\}\}$ is not $\{0\}$. One easily checks that $\I$ is an ideal of $\A$. If $x\in\I$ is such that $x\I x =\{0\}$, then
  $(xax)b(xax)= x(axbxa)x=0$ for all $a,b\in\A$, which first yields $xax=0$, and hence $x=0$. Thus, $\I$ is a semiprime algebra. Since $a\notin\I$ we have $\dim_{\FF}\I < N$. By the  induction assumption we have $\I\cong M_{n_1}(\D_1)\times\ldots\times M_{n_p}(\D_p)$ for some
  $n_i\ge 1$ and  division algebras $\D_i$, $i=1,\ldots,p$.
  As each factor $M_{n_i}(\D_i)$ has a unity, so does $\I$. Lemma \ref{LDP} tells us that 
 $\A\cong\I\times\J$ for some ideal $\J$ of $\A$.
We may use the induction assumption also for $\J$ and  conclude that  $\J\cong M_{n_{p+1}}(\D_{p+1})\times\ldots\times M_{n_r}(\D_r)$ for some 
 $n_i\ge 1$ and  division algebras $\D_i$, $i=p+1,\ldots,r$. The result now clearly follows.
\end{proof}

In  the finite dimensional case  semiprime algebras coincide with the so-called {\em semisimple} algebras. In the literature one more often finds versions of 
 Theorem \ref{W3} using this term. 
\\

 {\bf Concluding remarks}. Our aim was to write an expository article presenting a shortcut from elementary definitions to a substantial piece of mathematics. The proof of Wedderburn's theorem given is certainly pretty direct.  But how original is it? To be honest, we do not know. We did not find such a proof when searching the literature. But on the other hand, it is not based on some revolutionary new idea. So many mathematicians have known this theory for so many years that one hardly imagines that something essentially new can be invented.
After a closer look at \cite{W} we have realized that a few details in our construction of matrix units are somewhat similar to those used by 
 Wedderburn himself.  Thus, some of these ideas have been  around for a hundred years. Or maybe even more. We conclude  this article by quoting Wedderburn \cite[page 78]{W}: ``Most of the results contained in the present paper have already been given, chiefly by Cartan and Frobenius, for algebras whose coefficients lie in the field of rational numbers; and it is probable that many of the methods used by these authors are capable of direct generalisation to any field. It is hoped, however, that the methods of the present paper are, in themselves and apart from the novelty of the results, sufficiently interesting to justify its publication."\\
 
 {\sc Acknowledgement.} The author would like to thank Igor Klep, Lajos Molnar, Peter \v Semrl, \v Spela \v Spenko and Ga\v sper Zadnik for reading a preliminary version of this paper and for giving several valuable suggestions and comments.


\begin{thebibliography}{99}

\bibitem{W}
 J.\,H.\,M. Wedderburn, On hypercomplex numbers, {\em Proc. London Math. Soc.}  {\bf s2-6}  (1908), 77-118.



\end{thebibliography}
\end{document}